\def\bA{{\bm{A}}}
\def\bB{{\bm{B}}}
\begin{document}

\newtheorem{problem}{Problem}

\newtheorem{theorem}{Theorem}[section]
\newtheorem{corollary}[theorem]{Corollary}
\newtheorem{definition}[theorem]{Definition}
\newtheorem{conjecture}[theorem]{Conjecture}
\newtheorem{question}[theorem]{Question}
\newtheorem{lemma}[theorem]{Lemma}
\newtheorem{proposition}[theorem]{Proposition}
\newtheorem{quest}[theorem]{Question}
\newtheorem{example}[theorem]{Example}

\newenvironment{proof}{\noindent {\bf
Proof.}}{\rule{2mm}{2mm}\par\medskip}

\newenvironment{proofof3}{\noindent {\bf
Proof of  Theorem 1.2.}}{\rule{2mm}{2mm}\par\medskip}

\newenvironment{proofof5}{\noindent {\bf
Proof of  Theorem 1.3.}}{\rule{2mm}{2mm}\par\medskip}

\newcommand{\remark}{\medskip\par\noindent {\bf Remark.~~}}
\newcommand{\pp}{{\it p.}}
\newcommand{\de}{\em}

\title{  {An Oppenheim type determinantal inequality 
for the Khatri-Rao product}
\thanks{This work was supported by  NSFC (Grant Nos. 11671402, 11871479).
\newline 
 E-mail addresses: ytli0921@hnu.edu.cn (Y. Li), 
fenglh@163.com (L. Feng, corresponding author).} }

\author{Yongtao Li$^{a}$,  Lihua Feng$^{\dag,b}$\\
{\small ${}^a$School of Mathematics, Hunan University} \\
{\small Changsha, Hunan, 410082, P.R. China } \\
{\small $^b$School of Mathematics and Statistics, Central South University} \\
{\small New Campus, Changsha, Hunan, 410083, P.R. China. } }

\maketitle

\vspace{-0.5cm}

\begin{abstract}
We first give an Oppenheim type determinantal inequality 
for the Khatri-Rao product of two block positive semidefinite matrices, 
and then we extend  our result to multiple block matrices. 
As products, the extensions of  Oppenheim type inequalities for 
the Hadamard product are also included. 
 \end{abstract}

{{\bf Key words:}  
Khatri-Rao product; 
Hadamard product;  
Oppenheim's inequality; 
Fischer's inequality.  } \\
{2010 Mathematics Subject Classication.  15A45, 15A60, 47B65.}

\section{Introduction}

\label{sec1} 

We use the following standard notation. 
The set of $m\times n$ complex matrices is denoted by $\mathbb{M}_{m\times n}(\mathbb{C})$, 
or simply by $\mathbb{M}_{m\times n}$,  
when $m=n$, we put $\mathbb{M}_n$ for $\mathbb{M}_{n\times n}$. 
The identity matrix of order $n$ by  $I_n$, or $I$ for short. 
If $A=[a_{ij}]$ is of order $p\times q$ and  
 $B$ is of order $r\times s$, the Kronecker product (tensor product) of $A$ with $B$, 
denoted by $A\otimes B$, is an $pr\times qs$ matrix, 
partitioned into $p\times q$ block matrix 
with the $(i,j)$-block the $r\times s$ matrix $a_{ij}B$, 
i.e.,  $A\otimes B=[ a_{ij}B ]_{i,j=1}^{p,q}$. 
Given two matrices $A=[a_{ij}]$ and $B=[b_{ij}]$ with the same order, 
the Hadamard product of $A,B$ is defined as $A\circ B=[a_{ij}b_{ij}]$. 
It is easy to see that $A\circ B$ is a principal submatrix of $A\otimes B$. 
By convention, the $\mu \times \mu$ leading principal submatrix of $A$ 
is denoted by $A_{\mu}$.

Let  $A=[a_{ij}]\in \mathbb{M}_n$ be  positive semidefinite. 
 The  Hadamard inequality says that 
\begin{equation}
 \prod_{i=1}^n a_{ii} \ge \det A. 
\end{equation} \label{eqhada}
If $B=[b_{ij}]\in \mathbb{M}_n$ is positive semidefinite, 
it is well-known that $A\circ B$ is positive semidefinite. Moreover, 
the celebrated Oppenheim inequality (see \cite{Oppenheim30} or \cite[p. 509]{HJ13}) states that 
 \begin{equation} \label{eq1}
\det (A\circ B) \ge \det A\cdot \prod_{i=1}^n b_{ii} \ge \det (AB). 
\end{equation}
Setting $B=I_n$, then (\ref{eq1}) reduces to (\ref{eqhada}). 
Note that $A\circ B=B\circ A$, thus we also have 
\begin{equation} \label{eq2}
 \det (A\circ B) \ge \det B\cdot \prod_{i=1}^n a_{ii} \ge \det (AB).  
\end{equation}
The following  inequality (\ref{eq3}) not only generalized Oppenheim's result, 
but also presented a well connection between  (\ref{eq1}) and (\ref{eq2}); 
see \cite[Theorem 3.7]{Styan73} for more details.  
\begin{equation} \label{eq3}
\det (A\circ B) + \det (AB) \ge \det A\cdot \prod_{i=1}^n b_{ii} + \det B\cdot \prod_{i=1}^n a_{ii}. 
\end{equation}
Inequality (\ref{eq3}) is usually called Oppenheim-Schur's inequality. 
Furthermore, Chen \cite{Chen03}  generalized  (\ref{eq3}) and proved an implicit improvement, 
i.e.,  if $A$ and $B$ are $n\times n$ positive definite matrices, then 
\begin{equation} \label{eq4}
\det (A\circ B) \ge \det (AB)\prod_{\mu=2}^n 
\left( \frac{a_{\mu\mu}\det A_{\mu-1}}{\det A_{\mu}} + 
\frac{b_{\mu\mu}\det B_{\mu-1}}{\det B_{\mu}} -1  \right), 
\end{equation}
where $A_\mu$ and $B_{\mu}$ denote the $\mu \times \mu$ leading principal submatrices 
of $A$ and $B$, respectively.

Over the past years, various generalizations and extensions of 
(\ref{eq3}) and (\ref{eq4}) have been obtained in 
the literature. 
For instance, see \cite{Zhang04, Zhang09} for the equality cases; 
see \cite{Ando80, LZ97, YL00, Chen07} for the extensions of $M$-matrices; 
see \cite{GK12,Lin14,DL20}  for the extensions of block Hadamard product.

\vspace{0.3cm}

In this paper, we are mainly concentrated on  block positive semidefinite matrices. 
Let $\mathbb{M}_n(\mathbb{M}_{p\times q})$ be the set of complex matrices 
partitioned into $n\times n$ blocks with each block being a $p\times q$ matrix. 
The element of $\mathbb{M}_n(\mathbb{M}_{p\times q})$ is usually 
written as the bold letter $\bA=[A_{ij}]_{i,j=1}^n$, 
where $A_{ij}\in \mathbb{M}_{p\times q}$ for all $1\le i,j\le n$. 
For $\bm{A}=[A_{ij}]\in \mathbb{M}_n(\mathbb{M}_{p\times q}) $ and $\bB=[B_{ij}]\in \mathbb{M}_n(\mathbb{M}_{r\times s})$, 
the Khatri-Rao product $\bA *\bB$, first introduced  in \cite{KR68}, 
is given by $\bA *\bB:=[A_{ij}\otimes B_{ij}]_{i,j=1}^n$, 
where $A_{ij}\otimes B_{ij}$ denotes the Kronecker product of $A_{ij}$ and $B_{ij}$. 
Clearly, when $p=q=r=s=1$, that is, $\bA$ and $\bB$ are $n\times n$ matrices with complex entries, 
the Khatri-Rao product coincides with the classical Hadamard product; 
when $n=1$, it is identical with the usual Kronecker product. 
It is easy to verify that 
$(\bA * \bB) *\bm{C}=\bA * (\bB *\bm{C})$, 
so the Khatri-Rao product of $\bA^{(1)},\ldots ,\bA^{(m)}$ could be denoted by 
$\prod_{i=1}^m*\bA^{(i)}$. 
We refer to \cite{Liu99, Liu02, Liu08}  for more properties of Khatri-Rao product.

Recently, Kim et al. \cite{Kim17} gave the following extension of 
Chen's result (\ref{eq4}) for the Khatri-Rao product, 
if $\bA,\bB\in \mathbb{M}_n(\mathbb{M}_k)$ are positive definite, then 
\begin{equation}  \label{eqkim}
\begin{aligned}
 \det (\bm{A}* \bm{B}) & \ge 
(\det \bm{A} \bm{B})^k \\
&\quad \times  \prod_{\mu=2}^n 
\Bigg( \Bigl( \frac{\det A_{\mu \mu} \det \bm{A}_{\mu-1}}{\det \bm{A}_{\mu}}\Bigr)^k 
 + \frac{\det B_{\mu \mu} \det \bm{B}_{\mu-1}}{\det \bm{B}_{\mu}}\Bigr)^k -1 \Bigg), 
\end{aligned}
\end{equation}
where $\bA_{\mu}=[A_{ij}]_{i,j=1}^{\mu}$ and $\bB_{\mu}=[B_{ij}]_{i,j=1}^{\mu}$ stand 
for the $\mu \times \mu$ leading principal block submatrices of $\bA$ and $\bB$, respectively. 

The paper is organized as follows. 
We first modify Kim's result (\ref{eqkim}) to more general setting 
where the blocks in each $n\times n$ block  matrix are of different order.  
Motivated by the works in \cite{FL16} and \cite{DL20}, 
we then show extensions of our results to multiple block positive semidefinite matrices. 
Our results extend the above mentioned results (\ref{eq3}), (\ref{eq4}) 
and (\ref{eqkim}).

\section{Main result}
\label{sec2}

To review the proof of (\ref{eqkim}) in \cite{Kim17}, 
we present a slightly more general result (Theorem \ref{thm21}). 
Clearly, when $p=q=k$, Theorem \ref{thm21} reduces to (\ref{eqkim}). 
Such a generalization also actuates our cerebration and 
propels the extension (Theorem \ref{thm24}). 
Because the lines of proof between Theorem \ref{thm21} and (\ref{eqkim})  are  similar, 
so we leave the details for the interested readers.

\begin{theorem} \label{thm21}
Let $\bm{A}=[A_{ij}]\in \mathbb{M}_n(\mathbb{M}_p),\bm{B}=[B_{ij}]\in  \mathbb{M}_n(\mathbb{M}_q)$ 
be positive definite. Then 
\begin{equation*}
\begin{aligned}
 \det (\bm{A}* \bm{B}) & \ge 
(\det \bm{A})^q(\det \bm{B})^p \\
&\quad \times  \prod_{\mu=2}^n 
\Bigg( \Bigl( \frac{\det A_{\mu \mu} \det \bm{A}_{\mu-1}}{\det \bm{A}_{\mu}}\Bigr)^q 
 + \frac{\det B_{\mu \mu} \det \bm{B}_{\mu-1}}{\det \bm{B}_{\mu}}\Bigr)^p-1 \Bigg), 
\end{aligned}
\end{equation*}
where $\bA_{\mu}=[A_{ij}]_{i,j=1}^{\mu}$ and $\bB_{\mu}=[B_{ij}]_{i,j=1}^{\mu}$ 
for every $\mu=1,2,\ldots ,n$.
\end{theorem}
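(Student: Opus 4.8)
The plan is to induct on $n$, the number of diagonal blocks, peeling off the last block-row/block-column and using the Khatri-Rao analogue of Fischer's inequality together with the base case $n=1$, which is exactly the determinantal identity for the Kronecker product $\det(A_{11}\otimes B_{11}) = (\det A_{11})^q(\det B_{11})^p$. Throughout I write $\bm{A}\in\mathbb{M}_n(\mathbb{M}_p)$, $\bm{B}\in\mathbb{M}_n(\mathbb{M}_q)$, so each $A_{ij}\otimes B_{ij}$ is a $pq\times pq$ block and $\bm{A}*\bm{B}\in\mathbb{M}_n(\mathbb{M}_{pq})$.

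First I would establish the two ingredients for the inductive step. The key structural fact is that $\bm{A}*\bm{B}$ positive semidefinite whenever $\bm{A},\bm{B}$ are, which follows because $\bm{A}*\bm{B}$ is a principal submatrix of the Kronecker product $\bm{A}\otimes\bm{B}$ after a suitable permutation — this is the standard Khatri-Rao argument from \cite{KR68}. The second ingredient is Fischer's inequality applied to the last diagonal block: for positive definite $\bm{M}\in\mathbb{M}_n(\mathbb{M}_k)$ one has $\det\bm{M}\le \det\bm{M}_{n-1}\cdot\det(\text{Schur complement of }\bm{M}_{n-1}\text{ in }\bm{M})$, or dually $\det\bm{M}/\det\bm{M}_{n-1}\le \det M_{nn}$. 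Applying this to $\bm{A}$ and $\bm{B}$ gives the quantities $\det A_{nn}\det\bm{A}_{n-1}/\det\bm{A}_n \le 1$ etc., which is why each factor in the product is a lower bound that only helps when $\bm{A},\bm{B}$ are far from block-diagonal.

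Next I would carry out the inductive step itself. Write $\bm{A}*\bm{B}$ in block form with last block-row/column indexed by $n$; the $(n,n)$ block is $A_{nn}\otimes B_{nn}$ and the top-left $(n-1)\times(n-1)$ block is $\bm{A}_{n-1}*\bm{B}_{n-1}$. Using the generalized Hadamard–Fischer bound for the Khatri-Rao product (the $k=1$ case is $\det(A\circ B)\ge \det(A\circ B)_{n-1}\cdot\big((a_{nn}\det A_{n-1}/\det A_n)+(b_{nn}\det B_{n-1}/\det B_n)-1\big)\det A_n\det B_n/(\det A_{n-1}\det B_{n-1})$, as in Chen \cite{Chen03}), I would split $\det(\bm{A}*\bm{B})$ as $\det(\bm{A}_{n-1}*\bm{B}_{n-1})$ times a Schur-complement determinant, then bound the Schur complement below by invoking the Oppenheim–Schur type inequality for a single Kronecker block against the already-processed part. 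Concretely, the step reduces to showing
\[
\frac{\det(\bm{A}*\bm{B})}{\det(\bm{A}_{n-1}*\bm{B}_{n-1})}\ \ge\ (\det A_{nn})^q(\det B_{nn})^p\left(\Bigl(\frac{\det A_{nn}\det\bm{A}_{n-1}}{\det\bm{A}_n}\Bigr)^q+\Bigl(\frac{\det B_{nn}\det\bm{B}_{n-1}}{\det\bm{B}_n}\Bigr)^p-1\right)\Big/\!\Big(\!\cdots\!\Big),
\]
matched against the telescoping of the asserted product, together with the identity $(\det A_{nn}\otimes B_{nn})=(\det A_{nn})^q(\det B_{nn})^p$; combining with the inductive hypothesis $\det(\bm{A}_{n-1}*\bm{B}_{n-1})\ge(\det\bm{A}_{n-1})^q(\det\bm{B}_{n-1})^p\prod_{\mu=2}^{n-1}(\cdots)$ and simplifying the telescoping product $\prod(\det\bm{A}_\mu/\det\bm{A}_{\mu-1})^q=(\det\bm{A}_n/\det A_{11}^{?})^q$ closes the induction.

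The main obstacle I anticipate is the Schur-complement step: one must pass from a determinantal inequality about the $pq\times pq$ block $A_{nn}\otimes B_{nn}$ sitting against the block $\bm{A}_{n-1}*\bm{B}_{n-1}$ to the scalar-looking expression with the $q$-th and $p$-th powers, and the cross term forces an inequality of the shape $(x+y-1)\ge x^q\cdot$(something)$+y^p\cdot$(something)$-1$ that is \emph{not} simply a power of the scalar Chen inequality. The clean way around this — and the way I would pursue — is to first prove the genuine two-variable Oppenheim–Schur inequality $\det(\bm{A}*\bm{B})+\det(\bm{A}\bm{B})^{?}\ge\cdots$ at the level of Kronecker products using the factorization $\det(X\otimes Y)=(\det X)^{\dim Y}(\det Y)^{\dim X}$ applied blockwise, then specialize; essentially, reprove (\ref{eqkim})'s argument from \cite{Kim17} keeping track of the two different block sizes $p$ and $q$ wherever a power of a determinant appears, which is exactly what the excerpt says is ``similar'' and left to the reader. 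I expect no new idea is needed beyond careful bookkeeping of the exponents $p$ versus $q$ in the telescoping product $\prod_{\mu}\big(\det\bm{A}_{\mu-1}/\det\bm{A}_\mu\big)$ and in Fischer's inequality $\det\bm{A}_\mu/\det\bm{A}_{\mu-1}\le\det A_{\mu\mu}$.
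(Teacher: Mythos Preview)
Your plan---induction on $n$ via the Schur-complement/Fischer peel-off, with the base case $\det(A_{11}\otimes B_{11})=(\det A_{11})^q(\det B_{11})^p$, and then tracking the exponents $p,q$ through Kim's argument---is exactly what the paper intends: it gives no proof of its own and simply says the lines are ``similar'' to \cite{Kim17}, leaving the bookkeeping of $p$ versus $q$ to the reader. So your approach coincides with the paper's.

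One slip to fix: Fischer's inequality gives $\det\bm{A}_\mu\le \det A_{\mu\mu}\det\bm{A}_{\mu-1}$, hence
\[
\frac{\det A_{\mu\mu}\,\det\bm{A}_{\mu-1}}{\det\bm{A}_\mu}\ \ge\ 1,
\]
not $\le 1$ as you wrote; this is precisely what makes each factor $x^q+y^p-1$ in the product at least $1$ and allows the inductive bounds to be multiplied together. Your later remarks implicitly use the correct direction, so this appears to be a typo rather than a conceptual error, but it should be corrected before the displayed Schur-complement estimate is written out in full (the placeholder ``$\cdots$'' and ``$?$'' expressions you left will only close cleanly once the telescoping is done with the right sign).
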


The following Lemma \ref{lemfis} is called  Fischer's inequality, 
which is an improvement of Hadamard's inequality (\ref{eqhada}) 
for block positive semidefinite  matrices. 

\begin{lemma} (see \cite[p. 506]{HJ13} or \cite[p. 217]{Zhang11}) \label{lemfis}
If $A=\begin{bmatrix} A_{11} & A_{12} \\ A_{21} & A_{22} \end{bmatrix}$ is an 
$n\times n$ positive semidefinite 
matrix with diagonal blocks being square, then 
\[ \prod_{i=1}^n a_{ii}\ge  \det A_{11} \det A_{22} \ge \det A.  \]
\end{lemma}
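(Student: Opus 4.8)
The plan is to prove the two inequalities separately: the right-hand inequality $\det A_{11}\det A_{22}\ge \det A$ is the substantive one, while the left-hand inequality $\prod_{i=1}^n a_{ii}\ge \det A_{11}\det A_{22}$ will follow immediately from the scalar Hadamard inequality (\ref{eqhada}).

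For the right-hand inequality I would first reduce to the positive definite case by continuity: if $A$ is merely positive semidefinite, apply the inequality to $A+\varepsilon I_n$ for $\varepsilon>0$ and let $\varepsilon\to 0^{+}$, using that $\det$ is continuous. So assume $A$ is positive definite; then the diagonal blocks $A_{11}$ and $A_{22}$ are positive definite as well. Write $S:=A_{22}-A_{21}A_{11}^{-1}A_{12}$ for the Schur complement of $A_{11}$ in $A$. The standard block factorization of $A$ through $S$ gives the determinant identity $\det A=\det A_{11}\cdot\det S$, and $S$ is positive definite. Since $A_{22}-S=A_{21}A_{11}^{-1}A_{12}$ is positive semidefinite, we have $A_{22}\succeq S\succ 0$, and monotonicity of the determinant on the positive definite cone (if $X\succeq Y\succ 0$ then $\det X\ge\det Y$, e.g.\ via simultaneous diagonalization of the pencil $(X,Y)$) yields $\det A_{22}\ge\det S$. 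Multiplying by $\det A_{11}>0$ gives $\det A_{11}\det A_{22}\ge\det A_{11}\det S=\det A$.

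For the left-hand inequality, note that $A_{11}$ and $A_{22}$, being principal submatrices of the positive semidefinite matrix $A$, are themselves positive semidefinite, so Hadamard's inequality (\ref{eqhada}) applies to each: the product of the diagonal entries $a_{ii}$ with $i$ ranging over the first block is at least $\det A_{11}$, and likewise for the second block and $\det A_{22}$. Multiplying these two inequalities gives $\prod_{i=1}^n a_{ii}\ge\det A_{11}\det A_{22}$.

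The only genuine obstacle is the degenerate situation $\det A_{11}=0$, which makes the Schur complement undefined; the perturbation $A\mapsto A+\varepsilon I_n$ handles this, and in fact whenever $\det A_{11}\det A_{22}=0$ the right-hand inequality is trivial because $\det A\ge 0$. Everything else is routine bookkeeping of the block factorization together with the elementary fact that $\det$ is monotone on Loewner-ordered positive definite matrices.
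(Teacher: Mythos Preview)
Your argument is correct: the Schur-complement factorization together with the Loewner monotonicity of $\det$ gives $\det A_{11}\det A_{22}\ge\det A$, and the left inequality follows blockwise from Hadamard's inequality. Note, however, that the paper does not supply its own proof of this lemma; it simply cites the textbook references \cite[p.~506]{HJ13} and \cite[p.~217]{Zhang11}, where exactly the Schur-complement argument you wrote appears, so your approach matches the intended one.
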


Next, we need to introduce a numerical inequality, 
which could be found in \cite{DL20}. 
For completeness, we here include a proof for the convenience of readers. 

\begin{lemma} \label{lem}
If $\bigl( a^{(i)}_{1}, a^{(i)}_2,\ldots ,a^{(i)}_n\bigr)\in \mathbb{R}^n,i=1,\ldots ,m$ and  
$a_{\mu}^{(i)}\ge 1$ for all $i,\mu$, then 
\begin{equation*} 
 \prod_{\mu=1}^n \left( \sum_{i=1}^m a^{(i)}_{\mu} -(m-1)\right) 
\ge \sum_{i=1}^m \prod_{\mu=1}^n a^{(i)}_{\mu} -(m-1). 
\end{equation*}
\end{lemma}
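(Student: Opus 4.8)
The plan is to prove the inequality by induction on the number $m$ of tuples. The base case $m=1$ is the trivial identity $\prod_{\mu=1}^n a^{(1)}_\mu = \prod_{\mu=1}^n a^{(1)}_\mu$, and the case $m=2$ is the heart of the matter: I must show that for two tuples with all entries $\ge 1$,
\[
\prod_{\mu=1}^n \bigl( a^{(1)}_\mu + a^{(2)}_\mu - 1 \bigr) \ge \prod_{\mu=1}^n a^{(1)}_\mu + \prod_{\mu=1}^n a^{(2)}_\mu - 1.
\]
For this I would induct on $n$. Writing $P = \prod_{\mu=1}^{n-1}(a^{(1)}_\mu + a^{(2)}_\mu - 1)$, $x = \prod_{\mu=1}^{n-1} a^{(1)}_\mu$, $y = \prod_{\mu=1}^{n-1} a^{(2)}_\mu$, the inductive hypothesis gives $P \ge x + y - 1$, and with $u = a^{(1)}_n \ge 1$, $v = a^{(2)}_n \ge 1$ it suffices to check
\[
(x+y-1)(u+v-1) \ge xu + yv - 1,
\]
since the left side is a lower bound for $P(u+v-1)$. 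Expanding, this reduces to $xv + yu - x - y - u - v + 2 \ge 0$, i.e. $(x-1)(v-1) + (y-1)(u-1) \ge 0$, which holds because each factor is nonnegative. That settles $m=2$.

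For the inductive step from $m-1$ to $m$, I would group the first $m-1$ tuples together. Set $b_\mu := \sum_{i=1}^{m-1} a^{(i)}_\mu - (m-2)$; note $b_\mu \ge 1$ since each $a^{(i)}_\mu \ge 1$ (indeed $b_\mu - 1 = \sum_{i=1}^{m-1}(a^{(i)}_\mu - 1) \ge 0$). Then $\sum_{i=1}^m a^{(i)}_\mu - (m-1) = b_\mu + a^{(m)}_\mu - 1$, so the left-hand side equals $\prod_{\mu=1}^n (b_\mu + a^{(m)}_\mu - 1)$. Applying the already-established $m=2$ case to the two tuples $(b_\mu)$ and $(a^{(m)}_\mu)$ gives
\[
\prod_{\mu=1}^n (b_\mu + a^{(m)}_\mu - 1) \ge \prod_{\mu=1}^n b_\mu + \prod_{\mu=1}^n a^{(m)}_\mu - 1.
\]
Finally, the induction hypothesis for $m-1$ tuples gives $\prod_{\mu=1}^n b_\mu = \prod_{\mu=1}^n\bigl(\sum_{i=1}^{m-1} a^{(i)}_\mu - (m-2)\bigr) \ge \sum_{i=1}^{m-1}\prod_{\mu=1}^n a^{(i)}_\mu - (m-2)$, and substituting this into the previous line yields $\prod_{\mu=1}^n(\cdots) \ge \sum_{i=1}^{m}\prod_{\mu=1}^n a^{(i)}_\mu - (m-1)$, as desired.

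The only genuinely computational point is the $m=2$, two-variable-per-coordinate reduction to $(x-1)(v-1)+(y-1)(u-1)\ge 0$; everything else is bookkeeping with the substitution $b_\mu = \sum_{i<m} a^{(i)}_\mu - (m-2)$ and two nested inductions. The main thing to be careful about is verifying at each stage that the auxiliary quantities ($b_\mu$, and the partial products $x,y,P$) are indeed $\ge 1$, so that the hypotheses of the $m=2$ case and of Lemma~\ref{lem} itself remain applicable; this follows in each case from the elementary fact that a sum of terms $\ge 1$, minus (number of terms $-1$), is again $\ge 1$.
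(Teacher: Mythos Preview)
Your argument is correct. It differs from the paper's own proof, which inducts directly on $n$ for arbitrary $m$: the paper peels off the $(k{+}1)$-st coordinate, applies the hypothesis to the first $k$, and then checks that the cross terms $\sum_{i}(a^{(i)}_{k+1}-1)\bigl(\sum_{j\neq i}\prod_{\mu\le k}a^{(j)}_\mu-(m-1)\bigr)$ are nonnegative. You instead induct on $m$, first proving the $m=2$ statement (by an inner induction on $n$, via the identity $(x-1)(v-1)+(y-1)(u-1)\ge 0$) and then reducing $m$ tuples to two by setting $b_\mu=\sum_{i<m}a^{(i)}_\mu-(m-2)$. This is precisely the alternative route the paper sketches in its Remark following the lemma. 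The trade-off: the paper's proof is a single clean induction, while your version isolates the $m=2$ inequality as the only nontrivial ingredient and makes the reduction to it explicit; your care in checking $b_\mu\ge 1$ and $x,y\ge 1$ at each stage is exactly what is needed for the bootstrapping to go through.
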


\begin{proof}
We use induction on $n$. 
When $n=1$, there is nothing to show. 
Suppose that the required inequality is true for $n=k$. Then we consider the case $n=k+1$, 
\begin{align*}
& \prod_{\mu=1}^{k+1} \left( \sum_{i=1}^m a^{(i)}_{\mu} -(m-1)\right)  \\
&= \left( \sum_{i=1}^m a^{(i)}_{k+1} -(m-1)\right) \cdot 
\prod_{\mu=1}^k \left( \sum_{i=1}^m a^{(i)}_{\mu} -(m-1)\right) \\
&\ge \left( \sum_{i=1}^m a^{(i)}_{k+1} -(m-1)\right) \cdot 
\left( \sum_{i=1}^m \prod_{\mu=1}^k a^{(i)}_{\mu} -(m-1)\right) \\
&= \sum_{i=1}^m \prod_{\mu=1}^{k+1}a_{\mu}^{(i)} -(m-1) 
 + \sum_{i=1}^m \left(a_{k+1}^{(i)}-1\right) \!\biggl( \sum_{j=1,j\neq i}^m \prod_{\mu=1}^k 
a_{\mu}^{(j)}-(m-1)\biggr) \\
&\ge \sum_{i=1}^m \prod_{\mu=1}^{k+1}a_{\mu}^{(i)} -(m-1). 
\end{align*}
Thus, the required  holds for $n=k+1$, so the proof of  induction step is complete. 
\end{proof}

\noindent 
{\bf Remark.}~When $m=2$, Lemma \ref{lem}  implies that for every $a_{\mu},b_{\mu}\ge 1$, then 
\begin{equation} \label{eqlin}
  \prod_{\mu=1}^n (a_{\mu}+b_{\mu}-1) \ge \prod_{\mu=1}^n a_{\mu} + 
\prod_{\mu=1}^n b_{\mu} -1. 
\end{equation}
This inequality (\ref{eqlin}) plays an important role  in \cite{Lin14} 
for deriving determinantal inequalities, 
and we can see from  (\ref{eqlin}) that Chen's result (\ref{eq4}) 
is indeed an improvement of (\ref{eq3}). 
The stated proof of Lemma \ref{lem} is by induction on $n$. 
In fact, combining the above (\ref{eqlin}) and by induction on $m$, 
one could get another  way to prove  Lemma \ref{lem}.

The following Corollary \ref{coro} is a direct consequence from Lemma \ref{lem}, 
it will be used to facilitate the proof of Theorem \ref{thm24}. 

\begin{corollary} \label{coro}
If $b_1,b_2,\ldots ,b_m\in \mathbb{R}$ and $b_i\ge 1$ for all $i$, 
then for positive integer $q$
\[  \left(\sum_{i=1}^m b_i -(m-1)\right)^q \ge 
\sum_{i=1}^m b_i^q -(m-1). \] 
\end{corollary}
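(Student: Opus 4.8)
The plan is to obtain Corollary \ref{coro} as an immediate specialization of Lemma \ref{lem}, rather than proving anything new. First I would invoke Lemma \ref{lem} with $n$ replaced by the integer $q$ and with each of the $m$ real vectors taken to be the constant vector of length $q$, namely
\[
\bigl( a^{(i)}_{1}, a^{(i)}_{2},\ldots ,a^{(i)}_{q}\bigr) = (b_i, b_i, \ldots, b_i), \qquad i = 1,\ldots ,m .
\]
Since $b_i \ge 1$ by hypothesis, every entry $a^{(i)}_{\mu} = b_i$ satisfies $a^{(i)}_{\mu} \ge 1$, so the hypotheses of Lemma \ref{lem} are met for this choice.

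Next I would read off what the two sides of Lemma \ref{lem} become. For each fixed $\mu \in \{1,\ldots ,q\}$ we have $\sum_{i=1}^m a^{(i)}_{\mu} - (m-1) = \sum_{i=1}^m b_i - (m-1)$, which does not depend on $\mu$, so the left-hand side collapses to $\bigl(\sum_{i=1}^m b_i - (m-1)\bigr)^q$. For each fixed $i$ we have $\prod_{\mu=1}^q a^{(i)}_{\mu} = b_i^{q}$, so the right-hand side becomes $\sum_{i=1}^m b_i^{q} - (m-1)$. Comparing the two sides yields exactly the asserted inequality $\bigl(\sum_{i=1}^m b_i - (m-1)\bigr)^q \ge \sum_{i=1}^m b_i^{q} - (m-1)$.

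I expect no real obstacle here; the only point worth a remark is that the common factor $\sum_{i=1}^m b_i - (m-1)$ on the left is itself $\ge 1$ (indeed each $b_i \ge 1$ forces $\sum_{i=1}^m b_i - (m-1) \ge 1$), so the quantities being compared are genuinely nonnegative and raising to the power $q$ is unambiguous. As an alternative one could give a self-contained argument by induction on $q$, taking the case $q=2$ as the base step, where setting $c_i = b_i - 1 \ge 0$ reduces the desired inequality to $2\sum_{i<j} c_i c_j \ge 0$; but the reduction to Lemma \ref{lem} is shorter and is the version I would record.
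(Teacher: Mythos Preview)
Your proposal is correct and matches the paper's approach exactly: the paper states that Corollary~\ref{coro} is a direct consequence of Lemma~\ref{lem}, and the specialization you describe (taking $n=q$ and each vector constant equal to $b_i$) is precisely the intended one.
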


We give the following extension of Theorem \ref{thm21}. 

\begin{theorem} \label{thm24}
Let $\bA^{(i)}\in \mathbb{M}_n(\mathbb{M}_{q_i}),i=1,2,\ldots ,m$ be positive definite. Then 
\begin{equation*} 
\begin{aligned}
\det \left( \prod_{i=1}^m *\bA^{(i)}\right) 
&\ge \prod_{i=1}^m \left(\det \bA^{(i)}\right)^{\frac{q_1q_2\cdots q_m}{q_i}} \\
& \quad \times \prod_{\mu=2}^n  \left( \sum_{i=1}^m 
\left( \frac{\det A^{(i)}_{\mu \mu} \det \bm{A}^{(i)}_{\mu-1}}{\det \bm{A}^{(i)}_{\mu}} 
\right)^{\frac{q_1q_2\cdots q_m}{q_i}} - (m-1) \right), 
\end{aligned}
\end{equation*}
where $\bA^{(i)}_{\mu}$ stands for 
the $\mu \times \mu$ leading principal block submatrix of $\bA^{(i)}$. 
\end{theorem}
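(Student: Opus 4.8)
The plan is to mimic the proof of Theorem~\ref{thm21} (and hence of \eqref{eqkim}), replacing the two-factor argument by an $m$-factor one and using Corollary~\ref{coro} in place of the elementary inequality \eqref{eqlin}. First I would recall the basic determinantal identity for the Kronecker product: for square matrices $X\in\mathbb{M}_a$ and $Y\in\mathbb{M}_b$ one has $\det(X\otimes Y)=(\det X)^b(\det Y)^a$, and more generally $\det\bigl(\bigotimes_{i=1}^m X_i\bigr)=\prod_{i=1}^m(\det X_i)^{Q/q_i}$ where $Q=q_1\cdots q_m$ and $X_i\in\mathbb{M}_{q_i}$. Applying this block-diagonally, if each $\bA^{(i)}$ were block-diagonal then $\det\bigl(\prod_{i=1}^m *\bA^{(i)}\bigr)=\prod_{\mu=1}^n\det\bigl(\bigotimes_i A^{(i)}_{\mu\mu}\bigr)=\prod_{\mu=1}^n\prod_i(\det A^{(i)}_{\mu\mu})^{Q/q_i}$; this is the model case that the general inequality must reduce to, and it tells me the exponents $Q/q_i$ are forced.

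The core of the argument is an induction on $n$, the number of diagonal blocks. For the inductive step I would write each $\bA^{(i)}$ in the $2\times2$ block form with the last ($q_i\times q_i$) diagonal block split off: $\bA^{(i)}=\begin{bmatrix}\bA^{(i)}_{n-1}&*\\ *&A^{(i)}_{nn}\end{bmatrix}$, so that $\prod_i*\bA^{(i)}$ itself acquires a $2\times2$ block structure whose $(1,1)$ block is $\prod_i*\bA^{(i)}_{n-1}$ and whose $(2,2)$ block is $\bigotimes_i A^{(i)}_{nn}$. Fischer's inequality (Lemma~\ref{lemfis}) applied to $\prod_i*\bA^{(i)}$ gives
\[
\det\Bigl(\prod_{i=1}^m*\bA^{(i)}\Bigr)\ \ge\ \det\Bigl(\prod_{i=1}^m*\bA^{(i)}_{n-1}\Bigr)\cdot\det\Bigl(\bigotimes_{i=1}^m A^{(i)}_{nn}\Bigr)\ =\ \det\Bigl(\prod_{i=1}^m*\bA^{(i)}_{n-1}\Bigr)\cdot\prod_{i=1}^m\bigl(\det A^{(i)}_{nn}\bigr)^{Q/q_i}.
\]
Then I would invoke the induction hypothesis on the $(1,1)$ block and rewrite $\det A^{(i)}_{nn}=\frac{\det A^{(i)}_{nn}\,\det\bA^{(i)}_{n-1}}{\det\bA^{(i)}_{n}}\cdot\frac{\det\bA^{(i)}_{n}}{\det\bA^{(i)}_{n-1}}$, so that the product over $i$ of $(\det A^{(i)}_{nn})^{Q/q_i}$ splits into the desired $\mu=n$ factor of the target product (after the inequality below) times a telescoping piece that, combined with the induction hypothesis applied with $\bA^{(i)}_{n-1}$, restores $\prod_i(\det\bA^{(i)})^{Q/q_i}$. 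The remaining gap is exactly the inequality
\[
\prod_{i=1}^m\Bigl(\frac{\det A^{(i)}_{nn}\,\det\bA^{(i)}_{n-1}}{\det\bA^{(i)}_{n}}\Bigr)^{Q/q_i}\ \ge\ \sum_{i=1}^m\Bigl(\frac{\det A^{(i)}_{nn}\,\det\bA^{(i)}_{n-1}}{\det\bA^{(i)}_{n}}\Bigr)^{Q/q_i}-(m-1),
\]
which is false in general but becomes available once we know each base $\frac{\det A^{(i)}_{nn}\,\det\bA^{(i)}_{n-1}}{\det\bA^{(i)}_{n}}\ge1$; this positivity is precisely Fischer's inequality $\det\bA^{(i)}_{n}\le\det\bA^{(i)}_{n-1}\det A^{(i)}_{nn}$ applied to each $\bA^{(i)}_n$. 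Granting the bases are $\ge1$, the inequality $\prod_i x_i\ge\sum_i x_i-(m-1)$ for $x_i\ge1$ is the $n=1$ instance of Lemma~\ref{lem} (equivalently Corollary~\ref{coro} with $b_i=x_i^{q_i/??}$ — more cleanly, Lemma~\ref{lem} directly); and the full product $\prod_{\mu=2}^n(\cdots)$ in the statement is assembled by using Lemma~\ref{lem} across all $\mu$ simultaneously, or by carrying the product form through the induction.

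The step I expect to be the main obstacle is bookkeeping the exponents: one must check that when the $m$-fold Kronecker/Fischer estimate is iterated down the $n$ diagonal blocks, the telescoping of $\det\bA^{(i)}_{\mu}/\det\bA^{(i)}_{\mu-1}$ really does recombine into $(\det\bA^{(i)})^{Q/q_i}$ with the correct exponent $Q/q_i$ uniformly in $\mu$, and that the ``$-(m-1)$'' terms nest correctly — this is where Lemma~\ref{lem} (rather than a naive term-by-term use of Corollary~\ref{coro}) is essential, since it is the statement that the product of the slack terms dominates their ``sum minus $(m-1)$''. A secondary point to handle carefully is the reduction of $\det\bigl(\bigotimes_i A^{(i)}_{nn}\bigr)$ to $\prod_i(\det A^{(i)}_{nn})^{Q/q_i}$, which follows by associativity of $\otimes$ and the two-factor formula $\det(X\otimes Y)=(\det X)^{\dim Y}(\det Y)^{\dim X}$ applied repeatedly; one should also note that $\prod_i*\bA^{(i)}$ is positive definite (it is a principal submatrix of the positive definite $\bigotimes_i\bA^{(i)}$, using associativity of the Khatri–Rao and Kronecker products), so that Fischer's inequality and all the determinants in play are legitimate. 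Once the $n=1$ base case (where the claimed inequality reads $\det\bigl(\bigotimes_i A^{(i)}_{11}\bigr)\ge\prod_i(\det\bA^{(i)})^{Q/q_i}$ with an empty product on the right, hence equality) and the inductive step are in place, the theorem follows, and it visibly specializes to Theorem~\ref{thm21} when $m=2$ and to \eqref{eq4} when additionally all $q_i=1$.
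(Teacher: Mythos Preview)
Your approach is correct but takes a different route from the paper. The paper argues by induction on $m$: the base case $m=2$ is Theorem~\ref{thm21}, and the inductive step writes $\prod_{i=1}^m*\bA^{(i)}=\bigl(\prod_{i=1}^{m-1}*\bA^{(i)}\bigr)*\bA^{(m)}$, applies Theorem~\ref{thm21} to this two-factor Khatri--Rao product, inserts the induction hypothesis for the first factor, and then uses Corollary~\ref{coro} together with the inequality $R_\mu S_\mu\ge R_\mu+S_\mu-1$ (valid since Fischer gives $R_\mu,S_\mu\ge1$) to collapse the resulting expression. You instead induct on $n$, peeling off the last diagonal block via Fischer and closing the step with the elementary inequality $\prod_{i=1}^m x_i\ge\sum_{i=1}^m x_i-(m-1)$ for $x_i\ge1$. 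Your argument is more self-contained---it never invokes Theorem~\ref{thm21} as a black box and in fact reproves it when one sets $m=2$---while the paper's route has the virtue of making the reduction to the two-matrix case explicit, which is natural given that Theorem~\ref{thm21} is already on record.

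One small correction to your write-up: the inequality $\prod_{i=1}^m x_i\ge\sum_{i=1}^m x_i-(m-1)$ is \emph{not} the ``$n=1$ instance'' of Lemma~\ref{lem} (that instance is a tautology). It is, however, the special case of Lemma~\ref{lem} obtained by taking the lemma's $n$ equal to $m$ and setting $a^{(i)}_\mu=x_\mu$ when $i=\mu$ and $a^{(i)}_\mu=1$ otherwise; or one can simply prove it directly by the one-line induction $(x_m-1)\bigl(\sum_{i<m}x_i-(m-1)\bigr)\ge0$. With that adjustment your bookkeeping goes through exactly as you outline: the telescoping $\prod_i(\det\bA^{(i)}_{n-1}\det A^{(i)}_{nn})^{Q/q_i}=\prod_i(\det\bA^{(i)})^{Q/q_i}\cdot\prod_i c^{(i)}_n$ (with $c^{(i)}_n$ the $\mu=n$ Fischer ratio raised to $Q/q_i$) combines with $\prod_i c^{(i)}_n\ge\sum_i c^{(i)}_n-(m-1)$ to append the $\mu=n$ factor to the product coming from the induction hypothesis.
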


\begin{proof}
We show the proof by induction on $m$. 
When $m=2$, 
the required result degrades into Theorem \ref{thm21}. 
Assume that the required is true for the case $m-1$, that is 
\begin{equation*} 
\begin{aligned}
\det \left( \prod_{i=1}^{m-1} *\bA^{(i)}\right) 
&\ge \prod_{i=1}^{m-1} \left(\det \bA^{(i)}\right)^{\frac{q_1q_2\cdots q_{m-1}}{q_i}} \\
& \quad \times \prod_{\mu=2}^n  \left( \sum_{i=1}^{m-1} 
\left( \frac{\det A^{(i)}_{\mu \mu} \det \bm{A}^{(i)}_{\mu-1}}{\det \bm{A}^{(i)}_{\mu}} 
\right)^{\frac{q_1q_2\cdots q_{m-1}}{q_i}} - (m-2) \right). 
\end{aligned}
\end{equation*}
Now we consider the case $m>2$,  we have 
\begin{equation*} 
\begin{aligned}
&\det \left( \prod_{i=1}^m *\bA^{(i)}\right) \\
&=  \det \left( \Bigl(\prod_{i=1}^{m-1} *\bA^{(i)}\Bigr) * A^{(m)}\right)  \\
&\ge \left( \det \Bigl(\prod_{i=1}^{m-1} *\bA^{(i)}\Bigr)\right)^{q_m} 
\left( \det A^{(m)}\right)^{q_1q_2\cdots q_{m-1}} \\ 
&\quad \times 
\prod_{\mu=2}^n 
\left( \!\!\Biggl(\frac{\det \left(\prod\limits_{i=1}^{m-1} \!\!*\bA^{(i)}\right)_{\!\!\mu\mu} 
\!\! \!\det  \Bigl(\prod\limits_{i=1}^{m-1} \!\!*\bA^{(i)}\Bigr)_{\!\!\mu \!-\!1}}{
\det \Bigl(\prod\limits_{i=1}^{m-1} \!\!*\bA^{(i)}\Bigr)_{\!\!\mu}}\Biggr)^{\!\!q_m} \!\! + 
\left( \frac{\det A^{(m)}_{\mu \mu} \det \bA^{(m)}_{\mu -1}}{ 
\det \bA^{(m)}_{\mu}}\right)^{\!\!q_1q_2\cdots q_{m-1}}\!\! -1 \right) \\
&\ge \prod_{i=1}^m \left( \det \bA^{(i)}\right)^{\frac{q_1q_2\cdots q_m}{q_i}} 
 \times \prod_{\mu=2}^n  \left( \sum_{i=1}^{m-1} 
\left( \frac{\det A^{(i)}_{\mu \mu} \det \bm{A}^{(i)}_{\mu-1}}{\det \bm{A}^{(i)}_{\mu}} 
\right)^{\frac{q_1q_2\cdots q_{m-1}}{q_i}} - (m-2) \right)^{q_m} \\ 
&\quad \times 
\prod_{\mu=2}^n 
\left( \!\!\Biggl(\frac{\det \left(\prod\limits_{i=1}^{m-1} \!\!*\bA^{(i)}\right)_{\!\!\mu\mu} 
\!\! \!\det  \Bigl(\prod\limits_{i=1}^{m-1} \!\!*\bA^{(i)}\Bigr)_{\!\!\mu \!-\!1}}{
\det \Bigl(\prod\limits_{i=1}^{m-1} \!\!*\bA^{(i)}\Bigr)_{\!\!\mu}}\Biggr)^{\!\!q_m} \!\! + 
\left( \frac{\det A^{(m)}_{\mu \mu} \det \bA^{(m)}_{\mu -1}}{ 
\det \bA^{(m)}_{\mu}}\right)^{\!\!q_1q_2\cdots q_{m-1}}\!\! -1 \right).
\end{aligned}
\end{equation*}
For notational convenience, we denote  
\[ R_{\mu}:=\left( \sum_{i=1}^{m-1} 
\left( \frac{\det A^{(i)}_{\mu \mu} \det \bm{A}^{(i)}_{\mu-1}}{\det \bm{A}^{(i)}_{\mu}} 
\right)^{\frac{q_1q_2\cdots q_{m-1}}{q_i}} - (m-2) \right)^{q_m}, \]
and 
\[ S_{\mu}:= \left( \frac{\det \left(\prod\limits_{i=1}^{m-1} \!\!*\bA^{(i)}\right)_{\!\!\mu\mu} 
\!\! \det  \Bigl(\prod\limits_{i=1}^{m-1} \!\!*\bA^{(i)}\Bigr)_{\!\!\mu -1}}{
\det \Bigl(\prod\limits_{i=1}^{m-1} \!\!*\bA^{(i)}\Bigr)_{\!\!\mu}}\right)^{\!\!q_m} \!\! + 
\left( \frac{\det A^{(m)}_{\mu \mu} \det \bA^{(m)}_{\mu -1}}{ 
\det \bA^{(m)}_{\mu}}\right)^{\!\!q_1q_2\cdots q_{m-1}} \!\! -1. \]
By Fischer's inequality (Lemma \ref{lemfis}), we can see that 
\[ \det A^{(i)}_{\mu \mu} \det \bm{A}^{(i)}_{\mu-1} \ge \det \bm{A}^{(i)}_{\mu},
\quad i=1,2,\ldots ,m,  \]
which together with  Corollary \ref{coro} yields the following 
\begin{equation} \label{r1}
R_{\mu}  \ge  \sum_{i=1}^{m-1} 
\left( \frac{\det A^{(i)}_{\mu \mu} \det \bm{A}^{(i)}_{\mu-1}}{\det \bm{A}^{(i)}_{\mu}} 
\right)^{\frac{q_1q_2\cdots q_{m-1}q_m}{q_i}} - (m-2) \ge 1.
\end{equation}
On the other hand, by Fischer's inequality (Lemma \ref{lemfis}) again, we have 
\begin{gather*}
 \det \Bigl(\prod\limits_{i=1}^{m-1} \!\!*\bA^{(i)}\Bigr)_{\!\!\mu\mu} 
  \det  \Bigl(\prod\limits_{i=1}^{m-1} \!\!*\bA^{(i)}\Bigr)_{\!\!\mu -1}
 \ge \det \Bigl(\prod\limits_{i=1}^{m-1} \!\!*\bA^{(i)}\Bigr)_{\!\!\mu}.
\end{gather*}
Therefore, we obtain 
\begin{equation} \label{s1}
 S_{\mu} \ge \left( \frac{\det A^{(m)}_{\mu \mu} \det \bA^{(m)}_{\mu -1}}{ 
\det \bA^{(m)}_{\mu}}\right)^{\!\!q_1q_2\cdots q_{m-1}} \ge 1. 
\end{equation}
Since $R_{\mu}\ge 1$ and $S_{\mu}\ge 1$, which leads to 
\[ R_{\mu}S_{\mu} \ge R_{\mu} +S_{\mu} -1.\]
Hence, we get from (\ref{r1}) and (\ref{s1}) that 
\begin{align*}
\det \left( \prod_{i=1}^m *\bA^{(i)}\right)  
&\ge \prod_{i=1}^m \left( \det \bA^{(i)}\right)^{\frac{q_1q_2\cdots q_m}{q_i}}  
\prod_{\mu=2}^n R_{\mu} \prod_{\mu=2}^n S_{\mu} \\
&\ge \prod_{i=1}^m \left( \det \bA^{(i)}\right)^{\frac{q_1q_2\cdots q_m}{q_i}}  
\prod_{\mu=2}^n (R_{\mu} +S_{\mu} -1) \\
&\ge \prod_{i=1}^m \left(\det \bA^{(i)}\right)^{\frac{q_1q_2\cdots q_m}{q_i}} \\
& \quad \times \prod_{\mu=2}^n  \left( \sum_{i=1}^m 
\left( \frac{\det A^{(i)}_{\mu \mu} \det \bm{A}^{(i)}_{\mu-1}}{\det \bm{A}^{(i)}_{\mu}} 
\right)^{\frac{q_1q_2\cdots q_m}{q_i}} - (m-1) \right).
\end{align*}
This completes the proof. 
\end{proof}

Next, we will present the extension of 
Oppenheim type determinantal inequality (\ref{eq3}). 

\begin{theorem} \label{thm25}
Let $\bA^{(i)}\in \mathbb{M}_n(\mathbb{M}_{q_i}),i=1,2,\ldots ,m$ be positive demidefinite. Then 
\begin{equation} \label{eq12}
\begin{aligned}
&\det \left( \prod_{i=1}^m *\bA^{(i)}\right)  + (m-1) \prod_{i=1}^m \left(\det \bA^{(i)}\right)^{\frac{q_1q_2\cdots q_m}{q_i}} \\
&\quad \ge  \sum_{i=1}^m \prod_{j=1, j\neq i}^m
\Bigg( \det \bA^{(j)} \cdot \prod_{\mu=1}^n \det A^{(i)}_{\mu\mu} \Bigg)^{\frac{q_1q_2\cdots q_m}{q_i}} . 
\end{aligned}
\end{equation}
\end{theorem}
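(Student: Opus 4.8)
\medskip
\noindent\emph{Proof proposal.} The plan is to deduce (\ref{eq12}) from Theorem \ref{thm24} by the same device that turns Chen's inequality (\ref{eq4}) into the Oppenheim--Schur inequality (\ref{eq3}), namely the numerical inequality of Lemma \ref{lem}. First I would reduce to the case where every $\bA^{(i)}$ is positive definite: replacing $\bA^{(i)}$ by $\bA^{(i)}+\varepsilon I$ for $\varepsilon>0$, each determinant that appears in (\ref{eq12}) --- that of $\prod_{i=1}^m *(\bA^{(i)}+\varepsilon I)$, of $\bA^{(i)}+\varepsilon I$, and of the diagonal blocks $A^{(i)}_{\mu\mu}+\varepsilon I$ --- is a polynomial in $\varepsilon$, and the two sides of (\ref{eq12}) involve no denominators, so the general positive semidefinite case follows from the definite case by letting $\varepsilon\to 0^+$. (That $\prod_{i=1}^m *\bA^{(i)}$ is itself positive semidefinite, so that the left-hand determinant is a nonnegative number, is a known property of the Khatri--Rao product.)

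Now assume each $\bA^{(i)}$ is positive definite and, for $1\le i\le m$ and $1\le\mu\le n$, put
\[ a^{(i)}_{\mu}:=\left(\frac{\det A^{(i)}_{\mu\mu}\,\det\bA^{(i)}_{\mu-1}}{\det\bA^{(i)}_{\mu}}\right)^{q_1q_2\cdots q_m/q_i}, \qquad \det\bA^{(i)}_0:=1, \]
so that $a^{(i)}_1=1$. Fischer's inequality (Lemma \ref{lemfis}) applied to $\bA^{(i)}_{\mu}$ gives $\det A^{(i)}_{\mu\mu}\det\bA^{(i)}_{\mu-1}\ge\det\bA^{(i)}_{\mu}$, hence $a^{(i)}_{\mu}\ge 1$ for all $i,\mu$; and, using $\bA^{(i)}_1=A^{(i)}_{11}$ and $\bA^{(i)}_n=\bA^{(i)}$, the product telescopes to
\[ \prod_{\mu=1}^n a^{(i)}_{\mu}=\left(\frac{\prod_{\mu=1}^n\det A^{(i)}_{\mu\mu}}{\det\bA^{(i)}}\right)^{q_1q_2\cdots q_m/q_i}. \]

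Next I would feed the $m$ vectors $(a^{(i)}_1,\dots,a^{(i)}_n)$ into Lemma \ref{lem}. Its left-hand side $\prod_{\mu=1}^n\bigl(\sum_{i=1}^m a^{(i)}_{\mu}-(m-1)\bigr)$ has $\mu=1$ factor equal to $m-(m-1)=1$, so it coincides with $\prod_{\mu=2}^n\bigl(\sum_{i=1}^m a^{(i)}_{\mu}-(m-1)\bigr)$, which is exactly the product occurring in the bound of Theorem \ref{thm24}. Chaining Theorem \ref{thm24}, Lemma \ref{lem}, and the telescoping identity then yields
\[ \det\!\left(\prod_{i=1}^m *\bA^{(i)}\right)\ \ge\ \prod_{i=1}^m\bigl(\det\bA^{(i)}\bigr)^{q_1\cdots q_m/q_i}\left(\sum_{i=1}^m\left(\frac{\prod_{\mu=1}^n\det A^{(i)}_{\mu\mu}}{\det\bA^{(i)}}\right)^{q_1\cdots q_m/q_i}-(m-1)\right), \]
and distributing the factor $\prod_i(\det\bA^{(i)})^{q_1\cdots q_m/q_i}$ across the bracket cancels the denominator in the $i$-th summand and leaves the right-hand side of (\ref{eq12}), the remaining term being $-(m-1)\prod_i(\det\bA^{(i)})^{q_1\cdots q_m/q_i}$, which I would move to the left of (\ref{eq12}).

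I expect the difficulties here to be bookkeeping rather than conceptual: keeping the exponents $q_1\cdots q_m/q_i$ straight through the telescoping, correctly reconciling the index range $\mu=1,\dots,n$ of Lemma \ref{lem} with the range $\mu=2,\dots,n$ of Theorem \ref{thm24} (which is exactly what the convention $a^{(i)}_1=1$ is for), and making the $\varepsilon\to 0^+$ reduction airtight --- that is, confirming that once the transient denominators cancel, (\ref{eq12}) is a genuine inequality between continuous functions of the entries of the $\bA^{(i)}$.
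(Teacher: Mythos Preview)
Your proposal is correct and follows essentially the same route as the paper: reduce to the positive definite case by perturbation, invoke Theorem~\ref{thm24}, use Fischer's inequality (Lemma~\ref{lemfis}) to ensure each ratio is at least $1$, apply the numerical Lemma~\ref{lem}, and telescope. Your handling of the index mismatch via the convention $a^{(i)}_1=1$ is a clean way to say what the paper leaves implicit, and your final distribution step recovers exactly the paper's intermediate inequality~(\ref{eq13}).
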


\begin{proof}
If any of $A^{(i)}_{\mu\mu}$ in (\ref{eq12}) is singular, then so is $\bA^{(i)}$. 
In this case, the right hand side of (\ref{eq12}) equal to zero. 
Indeed, by a standard perturbation argument, 
we may assume without loss of generality that all $\bA^{(i)}$ are positive definite. 
Thus, we may rewrite (\ref{eq12}) as the following 
\begin{equation} \label{eq13}
\det \left( \prod_{i=1}^m *\bA^{(i)}\right) 
\ge \prod_{i=1}^m \left(\det \bA^{(i)}\right)^{\!\!\frac{q_1q_2\cdots q_m}{q_i}} 
  \left( \sum_{i=1}^m 
\left( \frac{\prod_{\mu=1}^n \det A^{(i)}_{\mu \mu}}{\det \bm{A}^{(i)}} 
\right)^{\!\!\!\frac{q_1q_2\cdots q_m}{q_i}} \!\!- (m-1) \right). 
\end{equation}
By Fischer's inequality (Lemma \ref{lemfis}), we have 
\[ \det A^{(i)}_{\mu \mu} \det \bm{A}^{(i)}_{\mu-1}\ge \det \bm{A}^{(i)}_{\mu}.  \]
Therefore, it follows from Theorem \ref{thm24} and Lemma \ref{lem} that 
\begin{equation*} 
\begin{aligned}
& \det \left( \prod_{i=1}^m *\bA^{(i)}\right) \\
&\ge \prod_{i=1}^m \left(\det \bA^{(i)}\right)^{\frac{q_1q_2\cdots q_m}{q_i}}  
  \prod_{\mu=2}^n  \left( \sum_{i=1}^m 
\left( \frac{\det A^{(i)}_{\mu \mu} \det \bm{A}^{(i)}_{\mu-1}}{\det \bm{A}^{(i)}_{\mu}} 
\right)^{\!\!\frac{q_1q_2\cdots q_m}{q_i}} - (m-1) \right) \\
&\ge  \prod_{i=1}^m \left(\det \bA^{(i)}\right)^{\frac{q_1q_2\cdots q_m}{q_i}} 
\left( \sum_{i=1}^m  \prod_{\mu=2}^n  
\left( \frac{\det A^{(i)}_{\mu \mu} \det \bm{A}^{(i)}_{\mu-1}}{\det \bm{A}^{(i)}_{\mu}} 
\right)^{\!\!\frac{q_1q_2\cdots q_m}{q_i}} - (m-1) \right). 
\end{aligned}
\end{equation*}
Observe that 
\[ \prod_{\mu=2}^n \frac{\det A^{(i)}_{\mu \mu} \det \bm{A}^{(i)}_{\mu-1}}{\det \bm{A}^{(i)}_{\mu}}  
= \frac{\prod_{\mu=1}^n \det A^{(i)}_{\mu \mu}}{\det \bm{A}^{(i)}} .  \]
Hence, the proof of (\ref{eq13}) is complete. 
\end{proof}

In the sequel, by setting $q_1=q_2=\cdots =q_m=1$ in Theorem \ref{thm24} and Theorem \ref{thm25},  
we can get the following Corollary \ref{coro26} and Corollary \ref{coro27}
 for the Hadamard product, respectively. 
These two corollaries are extensions of Oppenheim-Schur's inequality (\ref{eq3}) 
and Chen's result (\ref{eq4}). 
The first  corollary can be found in  \cite[Theorem 7]{FL16} 
and the second one can be seen in \cite[Theorem 4]{DL20}.

\begin{corollary} \label{coro26}
Let $A^{(i)}\in \mathbb{M}_n(\mathbb{C}),i=1,2,\ldots ,m$ be positive definite. Then 
\begin{equation*}
\begin{aligned}
 \det \left( \prod_{i=1}^m \circ A^{(i)}\right) 
\ge  \left(\prod_{i=1}^m \det A^{(i)}\right)  \prod_{\mu=2}^n  \left( \sum_{i=1}^m 
 \frac{ a^{(i)}_{\mu \mu} \det {A}^{(i)}_{\mu-1}}{\det {A}^{(i)}_{\mu}}  - (m-1) \right), 
\end{aligned}
\end{equation*}
where $A^{(i)}_{\mu}$ stands for 
the $\mu \times \mu$ leading principal  submatrix of $A^{(i)}$. 
\end{corollary}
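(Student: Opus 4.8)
The plan is to obtain Corollary \ref{coro26} directly from Theorem \ref{thm24} by specializing to scalar blocks, i.e.\ by putting $q_1 = q_2 = \cdots = q_m = 1$. First I would observe that with this choice each space $\mathbb{M}_n(\mathbb{M}_{q_i})$ becomes $\mathbb{M}_n(\mathbb{M}_1)$, which we identify with $\mathbb{M}_n(\mathbb{C})$; under this identification a positive definite $\bA^{(i)}$ is just an ordinary $n\times n$ positive definite matrix $A^{(i)}$, and, as already remarked in the introduction for the case $p=q=r=s=1$, the Khatri--Rao product $\prod_{i=1}^{m} *\,\bA^{(i)}$ is precisely the Hadamard product $\prod_{i=1}^{m} \circ\, A^{(i)}$.

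Next I would simplify the right-hand side of the inequality in Theorem \ref{thm24} under $q_i \equiv 1$. Every exponent $\frac{q_1q_2\cdots q_m}{q_i}$ equals $1$; the factor $(\det \bA^{(i)})^{\frac{q_1q_2\cdots q_m}{q_i}}$ becomes $\det A^{(i)}$; each diagonal block $A^{(i)}_{\mu\mu}$ is now the $1\times 1$ matrix $[\,a^{(i)}_{\mu\mu}\,]$, so $\det A^{(i)}_{\mu\mu} = a^{(i)}_{\mu\mu}$; and the leading principal block submatrix $\bA^{(i)}_{\mu} = [A^{(i)}_{rs}]_{r,s=1}^{\mu}$ coincides with the ordinary leading principal submatrix $A^{(i)}_{\mu}$, so $\det \bA^{(i)}_{\mu} = \det A^{(i)}_{\mu}$. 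Feeding these identities into the conclusion of Theorem \ref{thm24} turns it verbatim into the inequality asserted in Corollary \ref{coro26}, which finishes the proof.

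I do not expect any genuine obstacle here: the statement is an immediate corollary, and the only thing that requires a word of care is the bookkeeping that translates the block notions (the determinant of a diagonal block, the determinant of a leading principal block submatrix, the block product $*$) into their scalar counterparts, together with the trivial remark that positive definiteness survives the identification $\mathbb{M}_n(\mathbb{M}_1)\cong\mathbb{M}_n(\mathbb{C})$. If a self-contained argument were preferred, one could instead replay the induction on $m$ from the proof of Theorem \ref{thm24}, invoking Hadamard's inequality (\ref{eqhada}) in place of Fischer's inequality (Lemma \ref{lemfis}) and appealing to Corollary \ref{coro} and Lemma \ref{lem} exactly as there; but the specialization route above is the shortest and is the one indicated by the sentence preceding the statement.
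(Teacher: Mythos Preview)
Your proposal is correct and follows exactly the approach indicated in the paper: the corollary is obtained from Theorem \ref{thm24} by specializing $q_1=q_2=\cdots=q_m=1$, whereupon the Khatri--Rao product becomes the Hadamard product and all the block quantities reduce to their scalar counterparts. The paper gives no further details beyond this one-line specialization, so your elaboration of the bookkeeping is already more than is required.
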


\begin{corollary} \label{coro27}
Let $A^{(i)}\in \mathbb{M}_n(\mathbb{C}),i=1,2,\ldots ,m$ be positive semidefinite. Then 
\begin{equation*}
\begin{aligned}
\det \left( \prod_{i=1}^m \circ A^{(i)}\right)  + (m-1) \prod_{i=1}^m \det A^{(i)}
 \ge  \sum_{i=1}^m \prod_{j=1, j\neq i}^m
 \det A^{(j)}\prod_{\mu=1}^n  a^{(i)}_{\mu\mu} . 
\end{aligned}
\end{equation*}
\end{corollary}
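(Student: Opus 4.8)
The plan is to deduce Corollary \ref{coro27} as the scalar specialization $q_1=q_2=\cdots=q_m=1$ of Theorem \ref{thm25}. First I would unwind what every object appearing in Theorem \ref{thm25} becomes once all $q_i=1$: the ambient space $\mathbb{M}_n(\mathbb{M}_{q_i})$ is just $\mathbb{M}_n(\mathbb{C})$, so each $\bA^{(i)}$ is an ordinary $n\times n$ positive semidefinite matrix; the diagonal ``block'' $A^{(i)}_{\mu\mu}$ is the scalar entry $a^{(i)}_{\mu\mu}$, whence $\det A^{(i)}_{\mu\mu}=a^{(i)}_{\mu\mu}$ and $\prod_{\mu=1}^n\det A^{(i)}_{\mu\mu}=\prod_{\mu=1}^n a^{(i)}_{\mu\mu}$; the determinant of $\bA^{(i)}$ as a block matrix is its usual determinant $\det A^{(i)}$; each exponent $\tfrac{q_1q_2\cdots q_m}{q_i}$ equals $1$; and, as noted in the introduction, the Khatri-Rao product of matrices whose blocks are $1\times1$ is precisely the Hadamard product, so $\prod_{i=1}^m *\bA^{(i)}=\prod_{i=1}^m\circ A^{(i)}$.

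Substituting these identifications into inequality (\ref{eq12}) of Theorem \ref{thm25} turns its left-hand side into $\det\bigl(\prod_{i=1}^m\circ A^{(i)}\bigr)+(m-1)\prod_{i=1}^m\det A^{(i)}$ and its right-hand side into $\sum_{i=1}^m\prod_{j\neq i}\det A^{(j)}\prod_{\mu=1}^n a^{(i)}_{\mu\mu}$, which is exactly the claimed inequality of Corollary \ref{coro27}. Since Theorem \ref{thm25} is already stated and proved for arbitrary positive semidefinite block matrices, no additional argument (in particular, no fresh perturbation step) is needed for the corollary itself. I therefore do not anticipate any real obstacle here; the only thing requiring attention is bookkeeping --- matching each block-matrix symbol of Theorem \ref{thm25} with its scalar counterpart in the corollary.

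If instead a self-contained proof were preferred, I would replay the proof of Theorem \ref{thm25} with scalars in place of blocks: reduce to the positive definite case via the usual perturbation $\bA^{(i)}\mapsto\bA^{(i)}+\varepsilon I$ and a limiting argument; invoke Fischer's inequality (Lemma \ref{lemfis}) to see that $a^{(i)}_{\mu\mu}\det A^{(i)}_{\mu-1}\ge\det A^{(i)}_{\mu}$ for all $i,\mu$; apply Theorem \ref{thm24} with $q_i\equiv1$ to bound $\det\bigl(\prod_{i=1}^m\circ A^{(i)}\bigr)$ from below by $\bigl(\prod_{i=1}^m\det A^{(i)}\bigr)\prod_{\mu=2}^n\bigl(\sum_{i=1}^m\frac{a^{(i)}_{\mu\mu}\det A^{(i)}_{\mu-1}}{\det A^{(i)}_{\mu}}-(m-1)\bigr)$; feed the bracketed quantities, each $\ge1$, into the numerical Lemma \ref{lem}; and conclude with the telescoping identity $\prod_{\mu=2}^n\frac{a^{(i)}_{\mu\mu}\det A^{(i)}_{\mu-1}}{\det A^{(i)}_{\mu}}=\frac{\prod_{\mu=1}^n a^{(i)}_{\mu\mu}}{\det A^{(i)}}$. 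This route is equally routine.
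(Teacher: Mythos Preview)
Your proposal is correct and matches the paper's own derivation exactly: the paper obtains Corollary~\ref{coro27} simply by setting $q_1=q_2=\cdots=q_m=1$ in Theorem~\ref{thm25}, with the Khatri-Rao product collapsing to the Hadamard product and each $\det A^{(i)}_{\mu\mu}$ becoming $a^{(i)}_{\mu\mu}$. Your alternative self-contained route is likewise fine, being nothing more than the scalar specialization of the proof of Theorem~\ref{thm25}.
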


\section*{Acknowledgments}
This work was supported by  NSFC (Grant Nos. 11671402, 11871479).

\end{document}